\documentclass[11pt]{amsart}
\usepackage{amsmath,amsthm,amsfonts,amscd,amssymb,eucal,latexsym,mathrsfs,stmaryrd,enumitem,bm}
\usepackage[all]{xy}
\usepackage{mathtools}
\usepackage{enumitem}  
\usepackage{comment}

\setlength{\textwidth}{16cm}
\setlength{\oddsidemargin}{3mm}
\setlength{\evensidemargin}{3mm}

\newtheorem{theorem}{Theorem}[section]

\newtheorem{lemma}[theorem]{Lemma}

\theoremstyle{definition}
\newtheorem{definition}[theorem]{Definition}
\newtheorem{remark}[theorem]{Remark}

\theoremstyle{plain}
\newcounter{theoremintro}
\newtheorem{theoremi}[theoremintro]{Theorem}

\newcommand{\cU}{{\mathcal U}}

\newcommand{\cZ}{{\mathcal Z}}

\newcommand{\sU}{{\mathscr U}}

\newcommand{\Zb}{{\mathbb Z}}

\newcommand{\eps}{\varepsilon}

\newcommand{\abs}[1]{\left|#1\right|}
\newcommand{\brak}[1]{\left(#1\right)}

\numberwithin{equation}{section}

\allowdisplaybreaks

\begin{document}

\title{Polynomial growth, comparison, and the small boundary property}

\author{Petr Naryshkin}
\address{Petr Naryshkin,
Mathematisches Institut,
WWU M{\"u}nster, 
Einsteinstr.\ 62, 
48149 M{\"u}nster, Germany}
\email{pnaryshk@uni-muenster.de}


\begin{abstract}
We show that a minimal action of a finitely generated group of polynomial growth on a compact metrizable space has comparison. It follows that if such an action is free and has the small boundary property then it is almost finite and its $C^*$-crossed product is $\cZ$-stable, and consequently that such crossed products are classified by their Elliott invariant.
\end{abstract}

\date{\today}

\maketitle

\section{Introduction}
Over the last couple of decades the notion of $\cZ$-stability has come to play a central role in the classification theory for separable simple nuclear $C^*$-algebras, with its decisive importance as a regularity property having been recently cemented in \cite{CasEviTikWhiWin18}. It has therefore become a significant problem to determine when a free minimal action $G \curvearrowright X$ of a countable amenable group on a compact metrizable space gives rise to a $\cZ$-stable $C^*$-crossed product $C(X) \rtimes G$. Using the theory of subhomogeneous algebras, Niu and Elliott proved in \cite{EllNiu17} that this holds when $G=\Zb$ and $G \curvearrowright X$ has mean dimension zero. Later, Niu improved on this result and showed (\cite{Niu19b}, \cite{Niu19a}) that when $G = \Zb^d$ the radius of comparison of the crossed product $C(X) \rtimes \Zb^d$ is at most half of the mean dimension of the action. On the other hand, Hirshberg and Phillips \cite{HirPhi20}, following the work of Giol and Kerr \cite{GioKer10}, proved that for a certain class of minimal subshifts the radius of comparison of the crossed product is bounded from below in terms of the mean dimension and under some conditions is at least $\frac{1}{2}\mathrm{mdim}(G \curvearrowright X) - 1$.

An alternative approach was developed by Kerr who introduced a dynamical notion of almost finiteness \cite{Ker20} for a free action of an amenable group on a compact metrizable space and showed that when minimal it implies the $\cZ$-stability of crossed product. Kerr and Szab{\'o} \cite{KerSza20} proved that almost finiteness is equivalent to the action having both the small boundary property and comparison. They also showed that if every action of a group $G$ on a zero-dimensional space is almost finite then every action of $G$ on a finite-dimensional space is almost finite (note that for these actions the small boundary property is automatic). There have been a number of results establishing almost finiteness when the space $X$ is zero-dimensional:  Downarowicz and Zhang proved \cite{DowZha17} that the actions of groups with locally subexponential growth have comparison; Conley--Jackson--Kerr--Marks--Seward--Tucker-Drob showed \cite{ConJacKerMarSewTuc18} that a generic action of any countable amenable group is almost finite when the space $X$ is assumed to be perfect (i.e. a Cantor set); Conley--Jackson--Marks--Seward--Tucker-Drob proved \cite{ConJacMarSewTuc20} later that actions of groups from a certain class which includes polycyclic groups as well as the lamplighter group are almost finite; finally, in a recent paper by Kerr and the author \cite{KerNar21} it was shown that actions of all elementary amenable groups are almost finite. In view of the result of Kerr and Szab{\'o} above, we thus now know for a large class of groups that their free actions on finite dimensional spaces are almost finite.

The main result of this paper is the following. 

\begin{theoremi}\label{T-A}
A minimal action $G \curvearrowright X$ of a finitely generated group of polynomial growth on a compact metrizable space has comparison.
\end{theoremi}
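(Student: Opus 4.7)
The plan is to verify dynamical comparison directly. Fix open sets $U, V \subseteq X$ satisfying $\mu(\overline{U}) < \mu(V)$ for every $\mu \in M_G(X)$; the goal is to produce a finite open cover $\{U_1,\ldots,U_k\}$ of $\overline{U}$ together with elements $g_1,\ldots,g_k \in G$ such that $g_1 U_1, \ldots, g_k U_k$ are pairwise disjoint subsets of $V$. By weak$^*$ compactness of $M_G(X)$ there is $\delta > 0$ with $\mu(V) - \mu(\overline{U}) \geq \delta$ for every invariant $\mu$.

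The first ingredient is a uniform ergodic-type inequality along the balls $B_n$ in the Cayley graph of $G$ with respect to a fixed finite symmetric generating set. Polynomial growth gives $|B_{n+1}|/|B_n| \to 1$, so $(B_n)$ is a F\o lner sequence, moreover tempered in the sense of Shulman. Since $\chi_V - \chi_{\overline{U}}$ is lower semicontinuous, a standard Krylov--Bogolyubov compactness argument (extract a weak$^*$ cluster point of the empirical orbit measures of any hypothetical counterexample to obtain an invariant $\mu$ violating the gap) yields, for every $\eta > 0$, an $n$ such that
\[
|\{g \in B_n : gx \in V\}| - |\{g \in B_n : gx \in \overline{U}\}| \geq (\delta - \eta)|B_n|
\]
for every $x \in X$. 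By semicontinuity together with a finite subcover argument, one can then pass to a finite open refinement $\{W_1,\ldots,W_\ell\}$ of $\overline{U}$ on which the ``good'' translate set $\{g \in B_n : g W_i \subseteq V\}$ and the ``bad'' translate set $\{g \in B_n : g W_i \cap \overline{U} \neq \emptyset\}$ are constant in the basepoint within each $W_i$ while still inheriting the gap.

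The second ingredient is a dynamical tiling of $X$ by pieces whose shapes are close to $B_n$. The thin-boundary property $|\partial_S B_n|/|B_n| \to 0$ and the temperedness of $(B_n)$, both consequences of polynomial growth, together with the quasi-tiling and tower-refinement techniques developed in \cite{DowZha17, KerSza20, KerNar21}, provide an approximate decomposition of $X$ into ``castles'' on each of which one applies Hall's marriage theorem to the bipartite graph whose left vertices index translates of the pieces of the cover of $\overline{U}$ and whose right vertices index translates landing inside $V$; the uniform gap $\delta - \eta$ verifies the Hall condition. Assembling the per-castle matchings yields the required cover and group elements.

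The main obstacle is this final assembly: the boundary error of the tiling and the overlaps between neighboring castles must be dominated by the slack $\eta$ from the first step. This is precisely where polynomial growth enters decisively --- it guarantees both that $(B_n)$ is tempered and that $|\partial_S B_n|/|B_n|$ decays uniformly on compact sets of shapes $S$, so that the residual error can be driven below $\eta$ without spoiling the Hall condition inside any castle, and the argument closes up.
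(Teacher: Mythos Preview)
The second ingredient is where the argument breaks. You assert an ``approximate decomposition of $X$ into castles'' with shapes close to $B_n$, citing \cite{DowZha17,KerSza20,KerNar21}, and then run Hall's marriage theorem on each castle. But none of those references supplies such a decomposition for an arbitrary compact metrizable $X$: the arguments in \cite{DowZha17} and \cite{KerNar21} are confined to zero-dimensional spaces, where clopen towers exist, and \cite{KerSza20} shows that having open castle decompositions with small remainder (almost finiteness) is \emph{equivalent} to the small boundary property together with comparison --- the very conclusion you are after. Running Hall orbit-by-orbit with an Ornstein--Weiss quasi-tiling of the \emph{group} is fine measurably, but turning the resulting pointwise matching into finitely many \emph{open} pieces $U_i$ with $g_iU_i$ disjoint in $V$ is exactly the step that castles are meant to handle; without them this is a genuine gap, not a boundary-error bookkeeping issue. (A smaller point: in your refinement $\{W_i\}$ the inequality between the set-level good/bad counts and the pointwise counts goes the wrong way unless you first pass to $V^{-\eps}$ and $\overline{U}^{\eps}$.)

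The paper sidesteps all of this by never tiling $X$. Polynomial growth enters only through the doubling bound $|B_{2N}|<m|B_N|$ with $m=2^{\mathrm{ord}}+1$; combined with the F{\o}lner estimate this gives, for every $x$,
\[
|\{g\in B_{2N}:gx\in A^{\eps}\}|\;<\;m\,|\{g\in B_N:gx\in C^{-\eps}\}|.
\]
A direct greedy peeling (Lemma~\ref{D-lemma}), with no towers and no Hall theorem, converts this into $A\prec_{m-1}C$, i.e.\ weak $(m{-}1)$-comparison for \emph{every} action of $G$. A separate bootstrapping lemma (Lemma~\ref{ComparisonLemma}), using only minimality, then upgrades weak $m$-comparison to full comparison.
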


Note that by the celebrated theorem of Gromov \cite{Gro81} a finitely generated groups has polynomial growth if and only if it is virtually nilpotent. The next statement is a corollary to Theorem \ref{T-A} and follows from the results of Kerr \cite{Ker20} and Kerr-Szab{\'o} \cite{KerSza20} mentioned above.

\begin{theoremi}\label{T-B}
A free minimal action $G \curvearrowright X$ of a finitely generated group of polynomial growth on a compact metrizable space has the small boundary property if and only if it is almost finite. It follows that when $G \curvearrowright X$ has the small boundary property its $C^*$-crossed product $C(X) \rtimes G$ is $\cZ$-stable. Consequently, these crossed products are classified by their Elliott invariant.
\end{theoremi}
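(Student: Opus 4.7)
My plan is to treat Theorem \ref{T-B} as a direct corollary of Theorem \ref{T-A} combined with the results recalled in the introduction. For the equivalence of the small boundary property and almost finiteness, I would first observe that any finitely generated group of polynomial growth is virtually nilpotent by Gromov's theorem, hence countable and amenable, so Theorem \ref{T-A} applies and delivers comparison for $G \curvearrowright X$. The Kerr--Szab{\'o} characterization says that a free action of a countable amenable group on a compact metrizable space is almost finite if and only if it has comparison and the small boundary property. Since comparison is automatic from Theorem \ref{T-A}, the small boundary property is then equivalent to almost finiteness in our setting, giving both directions of the biconditional.

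For the $\cZ$-stability of $C(X) \rtimes G$, I would invoke Kerr's theorem that a minimal, free, almost finite action of a countably infinite amenable group on a compact metrizable space has $\cZ$-stable crossed product; combined with the previous step this yields the $\cZ$-stability assertion under the small boundary property hypothesis. (The case of finite $G$ is degenerate in this context, since freeness plus minimality on an infinite space forces $G$ to be infinite.)

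Finally, for classification I would verify the standard hypotheses of the Elliott classification theorem for unital simple separable nuclear $\cZ$-stable $C^*$-algebras satisfying the UCT (Elliott--Gong--Lin--Niu, together with the resolution of the Toms--Winter conjecture by Castillejos--Evington--Tikuisis--White--Winter and earlier contributors): separability follows from $X$ being metrizable and $G$ countable, unitality from compactness of $X$, simplicity from minimality and freeness of the action, nuclearity from amenability of $G$, $\cZ$-stability from the previous step, and the UCT from Tu's theorem for amenable groupoids applied to the transformation groupoid. The classification theorem then furnishes the Elliott classification.

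The entire argument is a chain of quotations whose only novel input is Theorem \ref{T-A}; accordingly, I do not anticipate any obstacles in proving Theorem \ref{T-B} itself, and the real work lies upstream in establishing comparison.
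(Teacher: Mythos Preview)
Your proposal is correct and mirrors the paper's own proof essentially step for step: invoke Theorem~\ref{T-A} for comparison, apply the Kerr--Szab{\'o} equivalence of almost finiteness with comparison plus the small boundary property, deduce $\cZ$-stability from Kerr's theorem, and obtain classification via Tu's UCT result together with the Elliott--Gong--Lin--Niu and Tikuisis--White--Winter machinery. If anything, you spell out the standing hypotheses (amenability, separability, simplicity, nuclearity) more explicitly than the paper does.
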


In an independent work Bosa, Perera, Wu, and Zacharias \cite{BosPerWuZac21} have used the theory of Cuntz semigroup to establish Theorem \ref{T-A} for $\Zb^d$-actions. In this setting a version of comparison and the conclusion of Theorem \ref{T-B} also appears in the work of Niu \cite{Niu19a} mentioned above (note that the small boundary property is equivalent to mean dimension zero for $\Zb^d$-actions \cite{GutLinTsu16}). Our approach, however, is more purely dynamical in nature and in particular avoids the use of subhomogeneous subalgebras. 

The structure of the paper is as follows. In section \ref{ComparisonSec} we recall the definition of comparison as well as give several weaker notions all of which end up being equivalent for minimal actions (Lemma \ref{ComparisonLemma}). In section \ref{ResultsSec} we prove Theorems \ref{T-A} and \ref{T-B} using one of these notions and give a few closing remarks.
\medskip

\noindent{\it Acknowledgements.}
The author wants to thank David Kerr for guidance and comments. The author is also grateful to Eusebio Gardella, Grigoris Kopsacheilis, and Julian Kranz for helpful comments and suggesions. The research was partially funded by the Deutsche Forschungsgemeinschaft (DFG, German Research Foundation) under Germany’s Excellence Strategy – EXC 2044 – 390685587, Mathematics Münster – Dynamics – Geometry – Structure; the Deutsche Forschungsgemeinschaft (DFG, German Research Foundation) – Project-ID 427320536 – SFB 1442, and ERC Advanced Grant 834267 - AMAREC

\section{Comparison properties}
\label{ComparisonSec}

Let $G\curvearrowright X$ be a continuous action 
of a countable amenable group on a compact metric space.

Let $A$ and $B$ be subsets of $X$. Recall that $A$ is {\it subequivalent} to $B$ ($A\prec B$) if for every closed set $C \subseteq A$ there are 
finitely many open sets $U_1 , \dots , U_n$ and elements $s_1 , \dots , s_n \in G$ such that $C 
\subset \bigcup_{i=1}^n U_i$ and the sets $s_i U_i$ for 
$i=1,\dots , n$ are pairwise disjoint and contained in $B$. 
We say that $A$ is {\it $m$-subequivalent} to $B$ ($A\prec_m B$) 
if for every closed set $C\subseteq A$ there are a finite open cover $\sU$ 
of $C$, an element $s_U \in G$ for every $U \in \sU$, 
and a partition of $\sU$ into subcollections $\sU_0 , \dots , \sU_m$ 
such that for every $i = 0,...,m$ the sets $s_U U$ for $U \in \sU_i$ 
are pairwise disjoint and contained in $B$. 

Let $A$ be a subset of $X$ and $\eps>0$. By $A^\eps$ we denote the set $\{x \in X \mid d(x, A) < \eps\}$ where $d$ is the metric on $X$. Similarly, by $A^{-\eps}$ we denote the set $\{x \in A \mid d(x, X \setminus A) > \eps\}$.

For a set $A \subset X$ we define lower and upper densities respectively:
$$\underline{D}(A) = \sup_{F \Subset G} \inf_{x \in X} \frac{1}{\abs{F}} \sum_{s \in F} 1_A(sx), \qquad \overline{D}(A) = \inf_{F \Subset G} \sup_{x \in X} \frac{1}{\abs{F}} \sum_{s \in F} 1_A(sx).$$

By \cite[Lemma 3.2]{KerSza20} if $F_n$ is a F{\o}lner sequence for $G$,
    $$\underline{D}(A) = \lim\limits_{n\to\infty} \inf_{x \in X} \frac{1}{\abs{F_n}} \sum_{s \in F_n} 1_A(sx), \qquad
    \overline{D}(A) = \lim\limits_{n\to\infty} \sup_{x \in X} \frac{1}{\abs{F_n}} \sum_{s \in F_n} 1_A(sx).$$

Let $M_G(X)$ be the space of $G$-invariant probability measures on $X$. By \cite[Propositions 3.3 and 3.4]{KerSza20},
\begin{gather}
\label{LowerDensityFormula}
    \underline{D}(A) = \inf_{\mu \in M_G(X)}\mu(A) = \lim\limits_{\eps \to 0+}\underline{D}(A^{-\eps}) \quad \mbox{when $A$ is open}, \\
\label{UpperDensityFormula}
    \overline{D}(A) = \sup_{\mu \in M_G(X)}\mu(A) = \lim\limits_{\eps \to 0+}\overline{D}(A^{\eps}) \quad \mbox{when $A$ is closed}.
\end{gather}

Parts (i) and (ii) of the following definition were introduced in \cite{Ker20} and \cite{KerSza20} respectively. Parts (iii) and (iv) are their natural weaker versions, similar to the Definition 6.3 in \cite{DowZha17}.

\begin{definition}
\label{comp}
Let $G\curvearrowright X$ be an action of a countable group on a compact metrizable space and $m$ a nonnegative integer. We say that the action has
\begin{enumerate}[label=(\roman*)]
\item \textit{comparison} if $A \prec B$ for all open sets $A,B \subseteq X$ such that $\mu(A) < \mu(B)$ for every $G$-invariant measure $\mu$,
\item \textit{$m$-comparison} if $A \prec_m B$ for all open sets $A,B \subseteq X$ such that $\mu(A) < \mu(B)$ for every $G$-invariant measure $\mu$,
\item \textit{weak comparison} if $A \prec B$ for all open sets $A,B \subseteq X$ such that $\overline{D}(A) < \underline{D}(B)$,
\item \textit{weak $m$-comparison} if $A \prec_m B$ for all open sets $A,B \subseteq X$ such that $\overline{D}(A) < \underline{D}(B)$.
\end{enumerate}
\end{definition}

\begin{remark}
In the above definition one can equivalently take $A$ to range over closed sets
instead of open ones.
\end{remark}

For a free action on a zero-dimensional space Downarowicz and Zhang proved that comparison and weak comparison are equivalent \cite[Lemma 6.4]{DowZha17}. We modify their argument to prove the following.

\begin{lemma}
\label{ComparisonLemma}
Let $G\curvearrowright X$ be a minimal action of a countably infinite group on a compact metrizable space. Then the properties {\normalfont (i)-(iv)} are equivalent.
\end{lemma}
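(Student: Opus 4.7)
The plan is to first dispatch the easy implications and then focus on the substantive one. The implications $(i) \Rightarrow (ii)$ and $(iii) \Rightarrow (iv)$ are immediate by choosing the trivial one-element partition ($m = 0$) in the $\prec_m$ cover, and the implications $(i) \Rightarrow (iii)$ and $(ii) \Rightarrow (iv)$ follow from the elementary observation (immediate from \eqref{LowerDensityFormula} and \eqref{UpperDensityFormula}) that any invariant $\mu$ satisfies $\mu(A) \leq \overline{D}(A)$ and $\mu(B) \geq \underline{D}(B)$, so the density hypothesis is strictly stronger than the measure one. The entire content of the lemma is therefore the implication $(iv) \Rightarrow (i)$.

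For this, fix open $A, B$ with $\mu(A) < \mu(B)$ for every $\mu \in M_G(X)$ and a closed $C \subseteq A$; I must produce a finite open cover of $C$ whose translates are pairwise disjoint in $B$. The first step is to extract a uniform gap: since $\nu \mapsto \nu(C)$ is upper semicontinuous and $\nu \mapsto \nu(B)$ is lower semicontinuous, their difference is lower semicontinuous and strictly positive on the compact simplex $M_G(X)$, hence attains a minimum $\delta > 0$. The second step is the essential use of minimality: because $G$ is infinite and the action minimal, every orbit is infinite, so $X$ has no isolated points and invariant measures are nonatomic. Applying Dini's theorem to the USC functions $\nu \mapsto \nu(K_n)$ for closed balls $K_n$ of radius $1/n$ around any prescribed point (which decrease pointwise to $0$), one obtains, for every $\eta > 0$ and every nonempty open subset of $X$, a nonempty open set $W$ inside it with $\overline{D}(\overline{W}) < \eta$; iterating around distinct orbit points yields arbitrarily many pairwise disjoint such ``reserve pockets'' inside any prescribed open set.

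With these tools in hand, the proof should follow the strategy of \cite[Lemma 6.4]{DowZha17}, replacing its clopen tower bookkeeping with the small open reserves from Step 2. Using the reserves, I would locally modify $C$ and $B$ to obtain a finite decomposition into pieces on which the strict density inequality $\overline{D}(C_j) < \underline{D}(B_j)$ holds, apply weak $m$-comparison on each piece to obtain $C_j \prec_m B_j$, and then, drawing on further reserve pockets as parking space, re-translate the higher levels of each $m$-subequivalence into fresh disjoint locations in $B$, compressing the level count by one per iteration until a single-level packing remains. Concatenating the resulting covers over $j$ yields $C \prec B$.

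The main obstacle I expect is this final compression step---the reduction of an $m$-subequivalence to a plain subequivalence. The gap $\delta$ from Step 1 is a fixed finite budget that must be distributed among the pieces $B_j$, the $m+1$ levels to be compressed, and the reserve pockets; each successive re-translation must simultaneously stay disjoint from previously placed translates, land inside its designated reserve, and cover its assigned portion of $C$, all while respecting the budget. Managing this bookkeeping in the topological (rather than clopen) setting---where the reserves have only controlled upper density rather than an exact measure---is the delicate technical heart of the argument.
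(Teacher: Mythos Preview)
Your preliminary reductions and the two preparatory observations (the uniform gap $\delta$ via semicontinuity, and the existence of arbitrarily many disjoint open ``reserve pockets'' of small upper density inside $B$) are exactly right and match the paper. The difficulty is, as you suspect, the compression step, and your proposed mechanism for it does not work.

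The problem is this: if you apply weak $m$-comparison directly to (pieces of) $C$ and $B$, you obtain families $\cU_0,\dots,\cU_m$ covering $C$, each mapping disjointly into $B$, but you have \emph{no control} over the density of the individual levels $\bigcup\cU_i$. In particular there is no reason any of them should fit into a reserve pocket of density $<\eta$; a single level may well carry a fixed fraction of the total mass. So ``re-translating the higher levels into fresh disjoint reserve pockets'' is not possible, and iterating does not help: after one pass you are in the same situation with the same lack of control. The budget $\delta$ cannot be distributed as you describe because the objects you want to park are not small.

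The paper resolves this by reversing the order of operations: rather than applying $m$-comparison first and then trying to compress, it first runs a greedy covering (enumerating $G=\{g_1,g_2,\dots\}$ and at step $n$ moving $g_n^{-1}(B_n\cap (g_nA_n)^{\eps_{n+1}})$ into $B_n$) which produces a genuine $0$-level subequivalence of $A\setminus A_k$ into $B_1:=B\setminus(h_0\overline U\cup\cdots\cup h_m\overline U)$. One checks that $GA_\infty\cap B_\infty=\emptyset$ while $\mu(B_\infty)>0$, forcing $\overline D(A_\infty)=0$; Dini then gives a finite stage $k$ with $\overline D(A_k)<\underline D(U)$. Only now is weak $m$-comparison invoked, once, to get $A_k\prec_m U$, and at this point compression is trivial: the $m{+}1$ levels are sent to the $m{+}1$ pre-reserved disjoint copies $h_0U,\dots,h_mU$. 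The key idea you are missing is this greedy exhaustion that shrinks the residual \emph{before} $m$-comparison is used, so that the only thing ever needing compression already has arbitrarily small density.
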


\begin{proof}
Note that any minimal actions on a finite space has comparison. We can therefore assume that $X$ is infinite. It is clear that (i) implies (ii) and (iii) and that either of them implies (iv). Thus, it remains to show that (iv) implies (i). Let $A$ be a closed set and $B$ an open set such that $\mu(A) < \mu(B)$ for every $G$-invariant measure $\mu$. It follows from \cite[Lemma 3.3]{KerSza20} that there exists an $\eps > 0$ such that $\mu(B) - \mu(A) > \eps$ for every $G$-invariant measure $\mu$. Choose a point $x \in X$ and elements $h_0, h_1, \ldots, h_m$ of the group such that $h_ix \in B$ for every $i = 0, 1, \ldots, m$ and $h_ix \not = h_jx$ unless $i=j$. Using \eqref{UpperDensityFormula} we can find a neighbourhood $U$ of $x$ such that $h_iU$ are disjoint subsets of $B$ for $i = 0, 1, \ldots, m$ and $\overline{D}(U) < \overline{D}(\overline{U}) < \frac{\eps}{(m+1)}$. Since the action is minimal, we have $\underline{D}(U) > 0$. Define $B_1 = B \setminus (h_0\overline{U} \cup h_1\overline{U} \cup \ldots \cup h_m\overline{U})$ and note that it is an open set such that $\mu(B_1) > \mu(A)$ for every $G$-invariant measure $\mu$.

Set $A_1 = A$ and use \cite[Lemma 3.3]{KerSza20} again to find $\eps_1 > 0$ such that $\mu(B_1^{-\eps_1}) > \mu(A_1^{\eps_1}) + \eps_1$ for every $G$-invariant measure $\mu$. Next, choose an arbitrary linear order for the elements of the group, i.e. write $G = \{g_1, g_2, \ldots\}$ and do the following inductive process. Assume that at step $n$ we have $\mu(B_n^{-\eps_n}) > \mu(A_n^{\eps_n}) + \eps_1$. Choose $\eps_{n+1}$ such that
\begin{gather*}
    \eps_{n+1} < \eps_n, \\
    g_n^{-1}\brak{\brak{g_nA_n}^{3\eps_{n+1}}} \subset A_n^{\eps_n}, \\
    g_n\brak{\brak{g_n^{-1}\brak{X\setminus B_n}}^{2\eps_{n+1}}} \subset \brak{X\setminus B_n}^{\eps_n},
\end{gather*}
and define 
\begin{gather*} 
    U_n = g_n^{-1}\left(B_n \cap \brak{g_nA_n}^{\eps_{n+1}}\right), \\
    B_{n+1} = B_n \setminus \overline{g_nU_n}, \\
    A_{n+1} = A_n \setminus U_n.
\end{gather*}
Similar to the inductive step in the proof of Lemma \ref{D-lemma} one can show that
$$g_n^{-1}\brak{B_n^{-\eps_n} \setminus B_{n+1}^{-\eps_{n+1}}} \subset A_n^{\eps_n}\setminus A_{n+1}^{\eps_{n+1}}.$$
It follows that for every $G$-invariant measure $\mu$
$$\mu\brak{B_{n+1}^{-\eps_{n+1}}} > \mu\brak{A_{n+1}^{\eps_{n+1}}} + \eps_1.$$

Set
$$A_\infty = \bigcap_{k=1}^\infty A_k, \quad B_\infty = \bigcap_{k=1}^\infty B_k.$$
Clearly, for every $G$-invariant measure $\mu$
$$\mu(B_\infty) - \mu(A_\infty) > \eps_1.$$

We claim that $\overline{D}(A_\infty) = 0$. By \eqref{UpperDensityFormula} it is sufficient to show that $\mu(A_\infty) = 0$ for every ergodic measure $\mu$. However, by construction we have $GA_\infty \cap B_\infty = \emptyset$. Since $\mu(B_\infty) > 0$ and $\mu$ is ergodic that can only happen if $\mu(A_\infty) = 0$.

Since each of the sets $A_k$ is closed, the function $\mu \mapsto \mu(A_k)$ is upper semicontinuous on the set of $G$-invariant measures. Thus, by Dini's theorem the sequence $\mu \mapsto \mu(A_k)$ converges uniformly to its pointwise limit $\mu \mapsto \mu(A_\infty) = 0$. Therefore, we can find $k$ such that $\overline{D}(A_k) < \underline{D}(U)$. By weak $m$-comparison, $A_k \prec_m U$ which implies that $A_k \prec h_0U \cup h_1U \cup \ldots \cup h_mU$. By construction $A \setminus A_k$ is covered by open sets $U_1, U_2, \ldots, U_{k-1}$ which become disjoint open subsets of $B_1$ after applying $g_1, g_2, \ldots, g_{k-1}$ respectively. We conclude that $A \prec B$.
\end{proof}




\section{Main results}
\label{ResultsSec}

\begin{lemma}
\label{D-lemma}
Let $G\curvearrowright X$ be an action of a discrete group on a metrizable space $X$. Suppose that $A \subset X$ is close, $B \subset X$ is open, and there exist a finite set $D \subset G$ and $\eps > 0$ such that for every $x \in X$
\begin{equation}
\label{D-densityIneq}
    \abs{\{g \in D^{-1}D \mid gx \in A^\eps\}} < m\abs{\{g \in D \mid gx \in B^{-\eps}\}}.
\end{equation}
Then $A \prec_{m-1} B$.
\end{lemma}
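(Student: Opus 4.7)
The plan is to prove the lemma by an iterative Vitali-style covering of $A$ in exactly $m$ rounds, with each round producing one of the $m$ color classes required by the definition of $A \prec_{m-1} B$.

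First I would reduce to a finite combinatorial setup. Using compactness of $A$, continuity of the action, and the finiteness of $D$ and $D^{-1}D$, I would choose a small $\delta>0$ and a finite pool of candidate tiles $(W,s)$, where each $W$ is an open subset of $A^\eps$ of diameter less than $\delta$, the $W$'s cover $A$, and $s \in D$ satisfies $sW \subseteq B^{-\eps}$. Taking $\delta$ sufficiently small (and the pool of $W$'s sufficiently rich) ensures that all the limit arguments below apply exactly and that tile conflicts correspond precisely to the combinatorial structure encoded in the hypothesis.

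Inductively for $r = 0, 1, \dots, m-1$, set
\begin{equation*}
A_r \;=\; A \setminus \bigcup_{r'<r}\bigcup_{(W,s)\in\mathcal{F}_{r'}} W,
\end{equation*}
and take $\mathcal{F}_r$ to be a maximal family of candidate tiles $(W,s)$ such that $W \cap A_r \neq \emptyset$, the $W$'s in $\mathcal{F}_r$ are pairwise disjoint, and the images $sW$ for $(W,s)\in\mathcal{F}_r$ are pairwise disjoint in $B^{-\eps}$. The key claim is then $A_m = \emptyset$: if not, pick $a \in A_m$, and for each round $r'<m$ and each $g \in D$ with $ga \in B^{-\eps}$, the maximality of $\mathcal{F}_{r'}$ combined with $a \in A_{r'}$ forces $ga$ to lie in a unique image $s'W'$ with $(W',s')\in\mathcal{F}_{r'}$, producing $h = (s')^{-1}g \in D^{-1}D$ with $ha \in W' \subseteq A^\eps$.

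With the candidate cover chosen carefully, the pairs $(r',g)$ (across all rounds and all valid $g$) yield distinct elements $h \in D^{-1}D$: within a fixed round this follows from the disjointness of the $W$'s in $\mathcal{F}_{r'}$, while across rounds it uses that each candidate $W$ can occur in at most one $\mathcal{F}_{r'}$ (once $W \in \mathcal{F}_{r'}$ we have $W \cap A_{r'+1}=\emptyset$, so $W$ is ineligible for later rounds), together with a choice of the candidate cover so that the $W$'s used across the $m$ rounds are pairwise disjoint. This yields $m \cdot |\{g \in D : ga \in B^{-\eps}\}|$ distinct elements of $\{h \in D^{-1}D : ha \in A^\eps\}$, contradicting \eqref{D-densityIneq} at $x=a$. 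Hence $A_m = \emptyset$, so $\bigcup_{r=0}^{m-1}\bigcup_{(W,s)\in\mathcal{F}_r} W \supseteq A$, and the families $\mathcal{F}_0, \dots, \mathcal{F}_{m-1}$ serve as the $m$ color classes required in the definition of $A \prec_{m-1} B$.

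I expect the main obstacle to be the last arrangement: ensuring that the $W$'s chosen across all $m$ rounds can simultaneously be kept pairwise disjoint while still collectively covering $A$. This is the heart of the discretization, and I would handle it by a standard compactness-and-continuity argument, exploiting the finiteness of $D \cup D^{-1}D$ and choosing the scale $\delta$ smaller than the minimal separation of $(D^{-1}D)$-translates in $X$, analogous to a Lebesgue-number reduction.
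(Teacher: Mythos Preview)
Your proposal has a genuine gap at exactly the point you flag as the main obstacle, and the fix you sketch cannot work.

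The difficulty is topological, not metric. For your counting argument you need the open sets $W$ selected across all $m$ rounds to be pairwise disjoint: otherwise the assignment $(r',g)\mapsto h=(s')^{-1}g$ is not injective, since $ha$ may lie in overlapping sets $W'_1\in\mathcal{F}_{r_1'}$ and $W'_2\in\mathcal{F}_{r_2'}$, collapsing your count. But you also need the union of these $W$'s to cover $A$. For a connected closed set $A$ this is impossible: a finite family of pairwise disjoint nonempty open sets can never cover a connected set unless the family has a single member. No Lebesgue-number or small-$\delta$ reduction repairs this; it is an obstruction of connectivity, not of scale. There is a related problem one step earlier: maximality of $\mathcal{F}_{r'}$ only tells you that the test tile $(W_a,g)$ conflicts with some $(W',s')\in\mathcal{F}_{r'}$, and that conflict may be in the \emph{domain} ($W_a\cap W'\neq\emptyset$, which can happen even though $a\notin W'$) rather than in the \emph{image} ($gW_a\cap s'W'\neq\emptyset$). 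In the domain case you extract no element $h\in D^{-1}D$ tied to $g$, so the counting already fails here.

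The paper sidesteps both issues by abandoning a pre-chosen finite pool of tiles. It runs $m|D|$ steps indexed by $(g,k)\in D\times\{1,\dots,m\}$, maintains $m$ separate shrinking open targets $B_{n,1},\dots,B_{n,m}$, and at step $n$ carves out the single open set $U_n=g^{-1}\bigl(B_{n,k}\cap (gA_n)^{\eps_{n+1}}\bigr)$ for a freshly chosen small $\eps_{n+1}$. The strictly decreasing margins $\eps_n$ are what allow the $U_n$'s to overlap freely while keeping the closed remainder $A_n$ and the open targets $B_{n,k}$ honest. Your injective counting is replaced by the pointwise invariant
\[
\bigl|\{g\in D^{-1}D : gx\in A_n^{\eps_n}\}\bigr| \;<\; \sum_{k=1}^m \bigl|\{g\in D : gx\in B_{n,k}^{-\eps_n}\}\bigr|,
\]
which one checks is preserved from step to step: every $h\in D$ whose contribution drops out of the right-hand side is matched by $g^{-1}h\in D^{-1}D$ dropping out of the left. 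At termination the right-hand side vanishes at any surviving $x\in A_{m|D|+1}$, while the left is at least $1$, a contradiction. It is this bookkeeping via a shrinking-$\eps$ invariant, rather than a combinatorial injection over a fixed cover, that your argument is missing.
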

\begin{proof}
Choose a bijection $\sigma$ from the set $D \times \{1, \ldots, m\}$ to the set $\{1, \ldots, m\left|D\right|\}$ (i.e., endow the first set with a linear order). We will perform an algorithm with $m\left|D\right|$ steps which will end up producing families $\cU_{1}, \cU_{2}, \ldots, \cU_{m}$ of disjoint open sets and group elements $g_U$ for each set in every family which taken together implement the subequivalence $A \prec_{m-1} B$. The algorithm is similar to the one in the proof of Lemma \ref{ComparisonLemma}: it is greedy in nature and we use the given margin of error (initially $\eps$) to modify the sets so that they become either open or closed.

For the initial data at step 1 set $A_1=A$, $B_{1, k}=B$ for all $k = 1, \ldots, m$, $\eps_1 = \eps$ and $\cU_{k} = \emptyset$ for all $k = 1, \ldots, m$. Now, assume that at step $n$ we have the initial data consisting of a closed set $A_n$, a $m$-tuple of open sets $(B_{n, 1}, B_{n, 2}, \ldots, B_{n, m})$ and an $\eps_n > 0$ such that
\begin{equation}
\label{OrbitDensityEq}
    \abs{\{g \in D^{-1}D \mid gx \in A_n^{\eps_n}\}} < \sum\limits_{k=1}^{m} \abs{\{g \in D \mid gx \in B_{n, k}^{-\eps_n}\}}
\end{equation}
for all $x \in X$. It follows from \eqref{D-densityIneq} that the initial data for the first step satisfies those assumptions. Let $\sigma^{-1}(n) = (g, k_0)$ and choose $\eps_{n+1}$ such that
\begin{gather*}
    \eps_{n+1} < \eps_n, \\
    g^{-1}\brak{\brak{gA_n}^{3\eps_{n+1}}} \subset A_n^{\eps_n}, \\
    g\brak{\brak{g^{-1}\brak{X\setminus B_{n, k_0}}}^{2\eps_{n+1}}} \subset \brak{X\setminus B_{n, k_0}}^{\eps_n}.
\end{gather*}
Set $U_n = g^{-1}\left(B_{n, k_0} \cap \brak{gA_n}^{\eps_{n+1}}\right)$. Add the set $U_n$ to the family $\cU_{k_0}$ and then define 
\begin{gather*} 
    B_{n+1, k} = B_{n, k}, \quad \mbox{if} \quad  k \ne k_0; \\
    g_{U_n} = g, \quad B_{n+1, k_0} = B_{n, k_0} \setminus \overline{gU_n}, \quad A_{n+1} = A_n \setminus U_n.
\end{gather*}
We claim that this new data satisfies \eqref{OrbitDensityEq} and so the next step can be applied. Indeed, it is clear that on the right hand side of the equation the only part which might get smaller between steps $n$ and $n+1$ is $k_0$-th summand. Suppose $h \in D$ is an element such that $hx$ belongs to $B_{n, k_0}^{-\eps_n}$ but not to $B_{n+1, k_0}^{-\eps_{n+1}}$. The latter implies that $d(hx, (X \setminus B_{n, k_0}) \cup g\overline{U_n}) \le \eps_{n+1}$. However, from $hx \in B_{n, k_0}^{-\eps_n}$ it follows that $d(hx, X \setminus B_{n, k_0}) \ge \eps_n > \eps_{n+1}$ which means that $d(hx, g\overline{U_n}) \le \eps_{n+1}$. Since $gU_n$ is a subset of $\brak{gA_n}^{\eps_{n+1}}$ it follows that $hx$ is in $\brak{gA_n}^{3\eps_{n+1}}$ and therefore by our choice of $\eps_{n+1}$ the point $g^{-1}hx$ is in $A_n^{\eps_n}$. However, $A_{n+1} \subset g^{-1}(X \setminus B_{n, k_0})$ and since $hx \not \in \brak{X\setminus B_{n, k_0}}^{\eps_n}$ we obtain that $g^{-1}hx \not \in A_{n+1}^{\eps_{n+1}}$. Therefore, for every $h \in D$ such that $hx$ is in $B_{n, k_0}^{-\eps_n}$ but not in $B_{n+1, k_0}^{-\eps_{n+1}}$ we have that $g^{-1}h \in D^{-1}D$ and $g^{-1}hx$ is in $A_n^{\eps_n}$ but not in $A_{n+1}^{\eps_{n+1}}$. Thus, the inequality \eqref{OrbitDensityEq} holds for the data at step $n+1$. Note that the data produced in the last $m\abs{D}$-th step also satisfies \eqref{OrbitDensityEq}. 

It remains to prove that the families $\cU_{k}$, where $k = 1, 2, \ldots, m$, implement the subequivalence $A \prec_{m-1} B$. It is clear from the construction that the sets $g_{U_n}U_n$ are disjoint open subsets of $B$ when $U_n$ are taken from the same family $\cU_{k}$. We thus need only verify that the collection $U_1, U_2, \ldots, U_{m\abs{D}}$ covers $A$. If this is not the case then $A_{m\abs{D}+1}$ is non-empty and we can find a point $x$ in this set. For every pair $(g, k) \in D \times \{1, \ldots, m\}$ the point $x$ is not in $U_{\sigma(g, k)}$ which, in particular, implies that $gx$ is not in $B_{\sigma(g, k), k}$ and therefore not in $B_{m\abs{D}+1, k}$. Thus, $\abs{\{g \in D^{-1}D \mid gx \in A_{m\abs{D}+1}^{\eps_{m\abs{D}+1}}\}} \ge 1$ while for each $k$ the set $\{g \in D \mid gx \in B_{m\abs{D}+1, k}^{-\eps_{m\abs{D}+1}}\}$ is empty. This is in contradiction to \eqref{OrbitDensityEq}.
\end{proof}

\begin{theorem}
\label{MainThm}
Let $G$ be a finitely generated group with polynomial growth of order $\mathrm{ord}$ and $G\curvearrowright X$ be a continuous action on a compact metric space. Then the action has weak $\mathrm{2^{ord}}$-comparison.
\end{theorem}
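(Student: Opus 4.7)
The plan is to reduce the theorem to Lemma \ref{D-lemma} applied with $m = 2^{\mathrm{ord}}+1$, which will then yield $A \prec_{2^{\mathrm{ord}}} B$. Given open sets $A,B \subseteq X$ with $\overline{D}(A) < \underline{D}(B)$, I use the remark after Definition \ref{comp} to replace $A$ by a closed set, and then invoke \eqref{LowerDensityFormula} and \eqref{UpperDensityFormula} to fix $\eps > 0$ with $\overline{D}(A^\eps) < \underline{D}(B^{-\eps})$. The candidate for the set $D$ in Lemma \ref{D-lemma} will be $D = B_n$, the Cayley ball of radius $n$ with respect to a fixed symmetric finite generating set of $G$, so that $D^{-1}D = B_n^{-1}B_n \subseteq B_{2n}$.

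The key geometric input is structural: by Gromov's theorem $G$ is virtually nilpotent, and by Bass--Pansu one has the asymptotic $|B_n|/n^{\mathrm{ord}} \to c$ for some $c > 0$, which in particular yields $|B_{2n}|/|B_n| \to 2^{\mathrm{ord}}$. Polynomial growth also forces $(B_n)_n$ to be a F\o lner sequence. Applying the uniform-in-$x$ F\o lner-sequence formulas for $\overline{D}$ and $\underline{D}$ stated in the excerpt right after \eqref{UpperDensityFormula}, both along $(B_{2n})$ and along $(B_n)$, one obtains that for any prescribed $\delta > 0$ and all sufficiently large $n$, uniformly over $x \in X$,
\begin{align*}
|\{g \in B_{2n}\,:\,gx \in A^\eps\}| &< (\overline{D}(A^\eps)+\delta)|B_{2n}|,\\
|\{g \in B_n\,:\,gx \in B^{-\eps}\}| &> (\underline{D}(B^{-\eps})-\delta)|B_n|,\\
|B_{2n}| &< (2^{\mathrm{ord}}+\delta)|B_n|.
\end{align*}
Choosing $\delta$ small enough relative to the positive gap $\underline{D}(B^{-\eps})-\overline{D}(A^\eps)$ combines these into
$$|\{g \in D^{-1}D\,:\,gx \in A^\eps\}| < (2^{\mathrm{ord}}+1)|\{g \in D\,:\,gx \in B^{-\eps}\}|$$
for every $x$, which is exactly the hypothesis of Lemma \ref{D-lemma} with $m = 2^{\mathrm{ord}}+1$. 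The lemma then delivers $A \prec_{2^{\mathrm{ord}}} B$, proving the weak $2^{\mathrm{ord}}$-comparison.

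The main obstacle is pinning down the precise constant $2^{\mathrm{ord}}$ rather than just some constant controlled by polynomial growth. This requires the sharp volume asymptotic $|B_{2n}|/|B_n| \to 2^{\mathrm{ord}}$ from Bass--Pansu, not merely the bound $|B_n| = \Theta(n^{\mathrm{ord}})$, and it is the containment $B_n^{-1}B_n \subseteq B_{2n}$ that explains why the exponent $2^{\mathrm{ord}}$ (and not another function of $\mathrm{ord}$) ends up in the conclusion. Once this is in place, the rest of the proof is essentially bookkeeping: package uniform F\o lner averaging together with Lemma \ref{D-lemma}.
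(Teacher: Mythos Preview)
Your proof is correct and follows essentially the same route as the paper's own argument: reduce to Lemma~\ref{D-lemma} with $D=B_N$ a word-metric ball, use that $(B_n)$ is F{\o}lner to control the averages uniformly in $x$, and use the doubling bound $|B_{2N}|<(2^{\mathrm{ord}}+1)|B_N|$ to absorb the containment $D^{-1}D\subseteq B_{2N}$. The only difference is cosmetic: you make explicit that the sharp ratio $|B_{2n}|/|B_n|\to 2^{\mathrm{ord}}$ requires the Bass--Pansu asymptotic $|B_n|\sim c\,n^{\mathrm{ord}}$ (not merely $|B_n|=\Theta(n^{\mathrm{ord}})$), whereas the paper leaves this implicit in the phrase ``polynomial growth of order $\mathrm{ord}$''.
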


\begin{proof}
Denote $\mathrm{2^{ord}}+1$ by $m$. Let $C$ be an open set and $A$ be a closed set such that $\overline{D}(A) < \underline{D}(C)$. Find $\eps > 0$ such that $\overline{D}(A^\eps) < \underline{D}(C^{-\eps})$ (see \eqref{LowerDensityFormula} and \eqref{UpperDensityFormula}). Equip $G$ with the word metric w.r.t. some symmetric generating set and let $B_n$ be a ball of radius $n$ around the identity. By the assumption that the growth is polynomial with order $\mathrm{ord}$ and the fact that the sequence $B_n$ is F{\o}lner (see, e.g. \cite{Tes07}) it follows that there is an $N$ such that $\abs{B_{2N}} < m\abs{B_N}$ and for all $n_1, n_2 \ge N$ and $x_1, x_2 \in X$ we have 
$$\frac{\left|\{g \in B_{n_1} \mid gx_1 \in A^\eps\}\right|}{\left|B_{n_1}\right|} < \frac{\left|\{g \in B_{n_2} \mid gx_2 \in C^{-\eps}\}\right|}{\left|B_{n_2}\right|}.$$
In particular, for all $x \in X$
\begin{equation*}
\left|\{g \in B_{2N} \mid gx \in A^\eps\}\right| < m \left|\{g \in B_{N} \mid gx \in C^{-\eps}\}\right|.
\end{equation*}
It now follows from Lemma \ref{D-lemma} that $A \prec_{m-1} B$.

\end{proof}

\begin{proof}[Proof of Theorems \ref{T-A} and \ref{T-B}]
Theorem \ref{T-A} follows from Theorem \ref{MainThm} and Lemma \ref{ComparisonLemma}. 

It was proved in \cite[Theorem 6.1]{KerSza20} that almost finiteness is equivalent to the small boundary property together with comparison. In the paper where he introduced almost finiteness Kerr proved that if a minimal action of a countably infinite amenable group is almost finite then the corresponding $C^*$-crossed product is $\cZ$-stable \cite[Theorem 12.4]{Ker20}. The UCT is satisfied automatically (\cite{Tu99}) and so it follows that such crossed products are classified by their Elliott invariant (see \cite{TikWhiWin17}, \cite{GonLinNiu15} and \cite{EllGonLinNiu15}). This completes the proof of Theorem \ref{T-B}.
\end{proof}

\begin{remark}
It is clear from the proof of \cite[Theorem 6.1]{KerSza20} given there that weak $m$-comparison is sufficient for the argument. Thus, for all free (not necessarily minimal) actions of finitely generated groups of polynomial growth on a compact metrizable space almost finiteness is equivalent to the small boundary property.
\end{remark}

\begin{remark}
There are free minimal actions of $\Zb$ which are not almost finite and such that the crossed product fails to have strict comparison (\cite{GioKer10} and \cite[Example 12.5]{Ker20}). Thus, we can conclude the following.
\begin{enumerate}
    \item The dynamical version of the Toms-Winter conjecture fails: comparison does not imply almost finiteness.
    \item Dynamical comparison does not imply strict comparison of the crossed product. 
\end{enumerate}
\end{remark}

\end{document}